     \def\section{\@startsection{section}{1}%
     \z@{.7\linespacing\@plus\linespacing}{.5\linespacing}%
     {\bfseries
     \centering
     }}
     \def\@secnumfont{\bfseries}
\newtheorem{theorem}{Theorem}[section]
\newtheorem{lemma}[theorem]{Lemma}
\newtheorem{proposition}[theorem]{Proposition}
\newtheorem{remark}[theorem]{Remark}
\newtheorem{definition}[theorem]{Definition}
\numberwithin{equation}{section}
\newcommand{\Z}{\mbox{$\mathbb Z$}}
\newcommand{\R}{\mbox{$\mathbb R$}}     
\newcommand{\C}{\mbox{$\mathbb C$}}
\newcommand{\T}{\mathcal{T}}
\newcommand{\Ss}{\mathcal{S}}
\newcommand{\B}{\mathcal{B(H)}}
\newcommand{\A}{\mathcal{A}}
\newcommand{\LL}{\mathcal{L}}
\newcommand{\HH}{\mathcal{H}}
\newcommand{\CC}{\mathcal{C}_r}
\begin{document}

\title[quantum  dynamical semi-groups with  QS dilation]{Some quantum  dynamical semi-groups with quantum  stochastic  dilation}

\author{Lingaraj Sahu}
\address{Lingaraj Sahu : Department of Mathematical Sciences, Indian Institute of Science Education and Research Mohali,
Knowledge City, Sector 81, SAS Nagar -140 306, India.}
\email{lingaraj@iisermohali.ac.in}

\author{Preetinder Singh}
\address{Preetinder Singh : Department of Mathematical Sciences, Indian Institute of Science Education and Research Mohali,
Knowledge City, Sector 81, SAS Nagar -140 306, India.}
\email{preetinder@iisermohali.ac.in}

\subjclass[2000] {Primary 46L55; Secondary 81S25}
\keywords{ Quantum dynamical semi-group, HP type dilation}

\begin{abstract}
 We consider the GNS Hilbert  space $\HH$ of a uniformly hyper-finite  $C^*$- algebra  and study
a class of   unbounded Lindbladian arises from commutators. Exploring the local structure of UHF algebra,
we have shown  that the associated Hudson-Parthasarathy type quantum stochastic differential equation admits a unitary solution.  The vacuum expectation  of homomorphic  co-cycle,
implemented by the Hudson-Parthasarathy flow, is conservative and gives the minimal semi-group associated with  the formal Lindbladian. We also associate conservative  minimal semi-groups  
to another class   of  Lindbladian   by   solving  the corresponding   Evan-Hudson equation.
\end{abstract}
\maketitle

\section{Introduction}
Quantum dynamical semi-groups (QDS) appear  naturally  when one studies the evolution of irreversible open quantum systems. QDS are non-commutative analogue
to Markov semi-groups in classical probability. For a  uniformly continuous semi-groups,  the generator is a bounded, conditionally completely positive (CCP) map. In \cite{lindblad}, Lindblad proved that  for  hyper-finite von Neumann algebras, which includes the case of $\B$, the generator $\LL$
of uniformly continuous QDS  can be written as $\LL(X) = \phi(X)+ G^*X+XG$,
where $\phi$ is a completely positive map and $G \in \B.$  In \cite{christensen} Christensen and Evans proved that for general $C^*$-algebras,
the generator of a uniformly continuous QDS exhibits the similar structure.

For the case of a strongly continuous  QDS, structure of the generator is not  well understood, Kato \cite{kato}  and
 Davies \cite{davies}    studied some  unbounded operators   or forms  similar to above on $\B$   and   gave
 a construction of  one-parameter semi-groups,  so-called minimal semi-group.  Under certain assumptions,  Davies in \cite{davies1}  showed that the unbounded generator have a similar  form as for the bounded case, thus extends the Lindblad's result to strongly continuous QDS.
However, these semi-groups need  not preserve the identity, i.e.,  need not be Markov.  Generally  such  unbounded operator   or form referred   as Lindbladian.
Starting with a Lindbladian,  a similar construction of a minimal semi-group was done for any von Neumann algebra in \cite{kbs}.

In this article, we have considered  Hudson-Parthasarathy (HP)     quantum stochastic differential equation   associated   with a model of unbounded Lindbladian  and construct the QDS
by
taking  vacuum expectation.
There are  various attempts to  study  HP quantum stochastic differential equation     with unbounded coefficients, for example see \cite{ff, kbs}
and references therein.

In section $2$, we discuss briefly QDS  and some results  of  quantum stochastic  calculus and quantum stochastic differential equations (QSDE) with bounded operator coefficients.
In section $3,$   a class of  unbounded  Lindblad  form are   defined on the GNS space of UHF $C^*$-algebra and properties of structure maps are studied.
Finally, exploring the local structure of UHF algebra, it is shown that the associated HP equation
admits a unitary solution.
This implies that the expectation semi-group of  the homomorphic  co-cycle implemented by this unitary   is
conservative and therefore the unique (also minimal) $C_0$-contraction semi-group associated with the given form.
The model is very special and hence simple enough to allow the construction of the minimal semi-group,
without any of the machineries of the  abstract theories, mentioned earlier in the introduction.
\section{Preliminaries}
Let $\HH$ be a separable Hilbert space and  $\B$ denotes the von Neumann algebra of bounded linear operators on $\HH$.
\begin{definition}
A \textit{quantum dynamical semi-group} on a von Neumann algebra $\A \subseteq \B$ is a semi-group  $\T = (\T_t)_{t\geq0}$  of completely positive maps
on $\A$ with the following properties:
\begin{enumerate}[$(i)$]
\item $ \T_t(I) \leq I$, for all $t \geq 0 $.
 \item $ \T_t$ is a ultra-weakly continuous operator i.e. normal for all $t \geq 0$.
\item for each $a \in \A$, the map $t \rightarrow \T_t(a)$ is continuous  with respect to the ultra-weak topology  on $\A$.
\end{enumerate}
\label{Def1}
\end{definition}
A QDS is called \textit{Markov} or \textit{Conservative} if $\T_t(I)=I$ for every $t.$
\begin{theorem} \cite{lindblad, christensen, krp1, kbs}
A bounded map $\LL$ on the  von Neumann algebra $\B$ is the infinitesimal generator of a uniformly
continuous QDS $(\T_t)_{t\geq0}$ if and only if it can be written as $$\LL(X)=\sum\limits_{n=1}^\infty L_n^*XL_n+G^*X+XG, ~
\mbox{for all}~ X \in \B,$$  where $L_n$'s and $G$ are in $\B$ and the  series on the right side converges strongly, with $G$ generator of a contraction semi-group in $\HH$.  The QDS is Markov if and only if $$Re(G)= -\dfrac{1}{2}\sum\limits_{n=1}^\infty L_n^*L_n. $$
\end{theorem}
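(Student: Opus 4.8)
The plan is to prove the two implications separately, following the classical route of Lindblad and Christensen--Evans specialised to $\B=\mathcal{B}(\HH)$. Throughout, the hypothesis on $G$ is used in the sharp form $2\,\mathrm{Re}(G)\le-\sum_nL_n^*L_n$ (which entails in particular that $G$ generates a contraction semi-group).

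For the sufficiency, suppose $\LL(X)=\sum_nL_n^*XL_n+G^*X+XG$. Since $\LL(I)=\sum_nL_n^*L_n+2\,\mathrm{Re}(G)$ is a bounded operator and the partial sums $\sum_{n\le N}L_n^*L_n$ increase to it, $\phi(X):=\sum_nL_n^*XL_n$ is a bounded normal completely positive map on $\B$ and $\beta(X):=G^*X+XG$ is bounded and normal; hence $\LL=\phi+\beta$ is bounded and normal, and $\T_t:=e^{t\LL}$ is a uniformly continuous semi-group of bounded normal maps with generator $\LL$. To see that each $\T_t$ is completely positive I would use the Trotter product formula $\T_t=\lim_{m\to\infty}\bigl(e^{(t/m)\phi}e^{(t/m)\beta}\bigr)^m$, norm-convergent since both generators are bounded: here $e^{s\beta}(X)=e^{sG^*}Xe^{sG}$ is completely positive for every $s\ge0$, while $e^{s\phi}=\sum_{k\ge0}\tfrac{s^k}{k!}\phi^k$ is a norm-convergent series of completely positive maps and hence completely positive, so each factor and therefore the composite in the Trotter product is completely positive, and complete positivity passes to the norm limit. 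For $\T_t(I)\le I$ I would note that $\LL(I)\le0$ by the bound on $G$, so $\tfrac{d}{dt}\T_t(I)=\T_t(\LL(I))\le0$ by positivity of $\T_t$, whence $t\mapsto\T_t(I)$ decreases from $I$; and the semi-group is Markov precisely when $\tfrac{d}{dt}\T_t(I)$ vanishes identically, that is (evaluating at $t=0$) when $\LL(I)=0$, i.e.\ $\mathrm{Re}(G)=-\tfrac12\sum_nL_n^*L_n$.

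For the necessity, suppose $\LL$ generates a uniformly continuous QDS; then $\LL$ is automatically bounded and normal. The first step is to observe that $\LL$ is \emph{conditionally completely positive}: whenever $a_1,\dots,a_k,b_1,\dots,b_k\in\B$ satisfy $\sum_ia_ib_i=0$, the operator-valued function $f(t)=\sum_{i,j}b_i^*\T_t(a_i^*a_j)b_j$ is $\ge0$ for $t\ge0$ by complete positivity of $\T_t$ and satisfies $f(0)=\bigl(\sum_ia_ib_i\bigr)^*\bigl(\sum_ja_jb_j\bigr)=0$, so $f'(0)=\sum_{i,j}b_i^*\LL(a_i^*a_j)b_j\ge0$; equivalently, the dissipation form $\mathcal D(x,y):=\LL(x^*y)-\LL(x^*)y-x^*\LL(y)+x^*\LL(I)y$ is completely positive. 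The second, and main, step is the Christensen--Evans structure theorem: from conditional complete positivity one builds, via a Stinespring--Kolmogorov dilation of $\mathcal D$, a completely positive map $\phi$ on $\B$ and an element $G\in\B$ with $\LL(X)=\phi(X)+G^*X+XG$, the feature particular to $\B=\mathcal{B}(\HH)$ being that the Hilbert bimodule carrying the derivation produced from $\mathcal D$ is complete, so that this derivation is inner and yields a genuine operator $G$ rather than merely a $2$-cocycle. Finally $\phi$ is automatically normal (being $\LL-\beta$), so Kraus' theorem represents it as $\phi(X)=\sum_nL_n^*XL_n$ with $\sum_nL_n^*L_n$ strongly convergent, and $\LL(I)\le0$ forces $2\,\mathrm{Re}(G)\le-\sum_nL_n^*L_n$, with equality if and only if the QDS is Markov, exactly as in the sufficiency part.

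I expect the Christensen--Evans step to be the main obstacle: deducing from conditional complete positivity of a bounded normal $\LL$ on $\B$ that it splits as a completely positive map plus the map $X\mapsto G^*X+XG$ with both pieces lying inside $\B$. The remaining ingredients --- the Trotter and Kraus theorems, and the translation between the ``$\LL(I)$'' conditions and sub-Markovianity/Markovianity --- should be routine.
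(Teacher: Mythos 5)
This theorem is not proved in the paper at all: it is quoted from \cite{lindblad, christensen, krp1, kbs} as background, so there is no in-paper argument to compare against. Your outline follows the standard literature route (CCP property of the generator, Christensen--Evans/Lindblad decomposition, Kraus representation for the necessity; Trotter product for the sufficiency), and the steps you do carry out --- the derivative argument showing conditional complete positivity, the Trotter factorisation $e^{s\beta}(X)=e^{sG^*}Xe^{sG}$ combined with $e^{s\phi}$ being a norm-convergent series of CP maps, and the equivalence of Markovianity with $\LL(I)=0$ --- are all correct.

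There are, however, two points to flag. First, the decisive step of the necessity direction is not actually proved: you name the Christensen--Evans decomposition (CCP $\Rightarrow$ $\LL=\phi+G^*\cdot+\cdot\,G$ with $\phi$ CP and $G\in\B$) as ``the main obstacle'' and describe only in one sentence how the Kolmogorov dilation of the dissipation form $\mathcal D$ yields a derivation that is inner for $\B=\mathcal B(\HH)$. That step is essentially the whole content of the theorem; without it the necessity direction is a plan, not a proof. For $\mathcal B(\HH)$ there is a more elementary route than the general Christensen--Evans machinery (fix a unit vector $\Omega$ and extract $G$ and the Kraus operators from $\LL$ evaluated against the rank-one projection $|\Omega\rangle\langle\Omega|$, as in Parthasarathy's Theorem 30.16), and spelling out either version is what is required. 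Second, you quietly replace the stated hypothesis ``$G$ generates a contraction semi-group'' (i.e.\ $\mathrm{Re}(G)\le 0$) by the stronger $2\,\mathrm{Re}(G)\le-\sum_nL_n^*L_n$. You are right that the weaker hypothesis does not give $\T_t(I)\le I$ (take $L_1=I$, $G=0$: then $\T_t(I)=e^tI$), so the literal ``if'' direction of the statement, read against Definition \ref{Def1}, fails; but this should be stated openly as a correction of the theorem rather than absorbed into the proof, since as written you are proving a different (and more accurate) statement than the one quoted.
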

For more general QDS, the generator can be understood as one coming from a similarly defined quadratic form  on $\HH,$ e.g., for $X\in \mathcal B(\HH),$
\begin{equation}
 \langle u,\LL(X)v\rangle  \equiv \langle u,XGv \rangle+\langle Gu,Xv \rangle + \sum\limits_{n=1}^\infty\langle L_nu,XL_nv \rangle
\label{QF}
\end{equation}
where these  $ L_n$  and $G$ are unbounded operators, $G$  is the generator of a $C_0$-contraction semi-group in $\HH$  such that
$Dom(G) \subseteq Dom(L_n)$, for each $n$  and
\begin{equation}
 \langle u,\LL(I)v\rangle  \equiv \langle u,Gv \rangle+\langle Gu,v \rangle + \sum\limits_{n=1}^\infty\langle L_nu,L_nv \rangle = 0,
\label{Unitalsgp}
\end{equation}
for all $ u,v \in Dom(G).$

Conversely, let $G$ be the generator (not necessarily bounded) of a $C_0$-contraction semi-group in $\HH$ and $L_n$
be a family of closed densely defined linear operators in $\HH$ with $Dom(G) \subseteq Dom(L_n)$  and let $\LL$ define formally by \eqref{QF}
 satisfies  \eqref{Unitalsgp}.
Then the aim  is to construct a canonical (minimal)  semigroup  associated  with the formal Lindbladian $\LL,$  for some results in this direction  see \cite{davies,kato,kbs}.

We conclude this section with a brief discussion of  Quantum stochastic calculus developed by Hudson and Parthasarathy. We state
a result of existence and uniqueness of unitary solution for QSDE. For detail see \cite{krp1, kbs}.

For  a separable Hilbert space $\HH$, let $\Gamma_{sym}(\HH)$ denotes the symmetric Fock space over $\HH$. For any $u \in \HH $, we denote by $e(u)$, the exponential vector in $\Gamma_{sym}(\HH)$ associated with $u$:
$$e(u)= \bigoplus\limits_{n \geq 0} \dfrac{1}{\sqrt{n}!}{u^{\otimes}}^n.$$
Given a contraction $T$ on $\HH$,  the second quantization $\Gamma(T)$ on $\Gamma_{sym}(\HH)$ is defined  by $\Gamma(T)e(u)=e(Tu)$ and extends to a contraction
on $\Gamma_{sym}(\HH).$  Moreover, if $T$ is an isometry (respectively unitary), then so is $\Gamma(T)$.\\

Let us write $\Gamma_{sym}$ for the symmetric Fock space $\Gamma_{sym}(L^2(\R_+, \mathbf{k}))$, where $\mathbf{k}$ is  a  Hilbert space with an orthonormal basis $\{e_l:1\le l\le m\}.$
\noindent Following result  provides a nice criterion  for the existence of a unitary solution for an HP type  QSDE with bounded coefficients.
\begin{theorem} \cite{krp1}
Let  $H, \{L_i~;~ 1\leq i \leq m\},  \{S_j^i~;~ 1\leq i,j \leq m\}$ are bounded operators in $\HH$ satisfying the following conditions:
\begin{enumerate}[$(i)$]
\item
$H$ is self-adjoint.
\item
$\sum\limits_{1\leq i,j \leq m}S_j^i \otimes \vert e_i\rangle\langle e_j\vert$ is a unitary operator in $\HH \otimes \mathbf{k}$.
\end{enumerate}
\noindent Define
\begin{equation}
L_j^i=
\begin{cases}
S_j^i-\delta_j^i & \text{if }~ 1\leq i,j  \leq m;\\
L_i & \text{if }~ 1\leq i \leq m, j=0;\\
-\sum\limits_{1\leq k \leq m} L_k^*S_j^k & \text{if }~ 1\leq j \leq m , i=0;\\
-(\iota H + \dfrac{1}{2}\sum\limits_{1\leq k \leq m} L_k^*L_k ) & \text{if}~ i=j=0;\\
\end{cases}
\end{equation}

 \noindent where $\delta_j^i$ is the Kronecker's delta function.
Then there exists a unique unitary process $U_t$ satisfying  the QSDE on $\HH \otimes \Gamma_{sym}$

\begin{equation}
U_t = I+ \sum\limits_{i,j=0}^m\int\limits_0^tU_s  L_j^i \Lambda_i^j(ds),
\label{QSDE}
 \end{equation}
 where $\Lambda_0^0$ is time, for $i,j\ge 1, \Lambda_i^j$ is conservation, $\Lambda_i^0$ is creation and $\Lambda_0^j$ is annihilation
 processes.

\label{ThmQSDE}
\end{theorem}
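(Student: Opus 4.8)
This is the classical Hudson--Parthasarathy theorem, and the plan is the standard two--stage argument specialized to the bounded--coefficient situation at hand. First I would produce a solution of \eqref{QSDE} by a Picard iteration on exponential vectors, using the fundamental estimates of quantum stochastic calculus; then I would invoke the quantum It\^o (second fundamental) formula, together with the algebraic structure of the matrix $(L_j^i)$, to show that the solution is simultaneously an isometry and a co-isometry, hence unitary, and that it is unique.

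For existence I would fix $u \in \HH$ and a step function $f \in L^2(\R_+,\mathbf k)$ (locally bounded, as the conservation integrals require), set $U_t^{(0)} = I$, and define the iterates by
\[
 U_t^{(n+1)}\, u\, e(f) \;=\; u\, e(f) \;+\; \sum_{i,j=0}^{m} \int_0^t U_s^{(n)}\, L_j^i\, \Lambda_i^j(ds)\, u\, e(f).
\]
The basic estimate for quantum stochastic integrals bounds $\bigl\| \int_0^t M_s\, \Lambda_i^j(ds)\, u\, e(f) \bigr\|^2$ by $C_t \int_0^t \| M_s\, u\, e(f) \|^2\, ds$, with $C_t$ depending only on $t$, on $\dim \mathbf k$, and on $f$ over $[0,t]$; since the $L_j^i$ are bounded this gives a Gronwall--type inequality, so $\{ U_t^{(n)}\, u\, e(f) \}_n$ is Cauchy, uniformly on compact time intervals, with a limit that is linear in $u\, e(f)$ and (by the same estimate) satisfies $\| U_t\, u\, e(f) \| \le C_t\, \|u\|\, \|e(f)\|$. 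This yields an adapted process $U_t$ solving \eqref{QSDE} on the exponential domain, and the same estimate applied to the difference of two solutions gives uniqueness.

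The content of the theorem is unitarity. Applying the quantum It\^o formula to $t \mapsto U_t^{*}U_t$ and collecting terms through the It\^o table $\Lambda_i^j(dt)\,\Lambda_k^\ell(dt) = \widehat{\delta}_k^{\,j}\,\Lambda_i^\ell(dt)$ (with $\widehat{\delta}_k^{\,j} = \delta_k^{\,j}$ for $j,k \ge 1$ and $0$ otherwise), one sees that $X_t := U_t^{*}U_t$ satisfies a \emph{linear} QSDE whose $\Lambda_i^j(dt)$--coefficient is
\[
 (L_i^j)^{*}\, X_t \;+\; X_t\, L_j^i \;+\; \sum_{k=1}^{m} (L_i^k)^{*}\, X_t\, L_j^k .
\]
Thus $X_t \equiv I$ (with $X_0 = I$) is a solution exactly when $L_j^i + (L_i^j)^{*} + \sum_{k=1}^{m} (L_i^k)^{*} L_j^k = 0$ for all $0 \le i,j \le m$, and a routine case--by--case check against the four cases defining $L_j^i$ --- using $H = H^{*}$ and $\sum_{k} (S_i^k)^{*} S_j^k = \delta_j^i\, I$, i.e. $\mathbf S^{*}\mathbf S = I$ for $\mathbf S := \sum_{i,j} S_j^i \otimes \vert e_i\rangle\langle e_j\vert$ --- verifies precisely this. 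Uniqueness for the linear QSDE then forces $U_t^{*}U_t = I$ for all $t$; in particular $U_t$ extends to an isometry on $\HH \otimes \Gamma_{sym}$.

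The step I expect to be the main obstacle is co-isometry, since $U_t^{*}U_t = I$ does not give $U_t U_t^{*} = I$ in infinite dimensions and the naive It\^o computation for $U_t U_t^{*}$ fails to close, the coefficient $L_j^i$ then sitting \emph{between} $U_t$ and $U_t^{*}$. I would get around this by producing a genuine inverse: either by conjugating $U_t^{*}$ with the time-reversal (flip) unitary on $L^2([0,t],\mathbf k)$, which turns it into a forward HP cocycle of the same type with a reflected coefficient matrix, or by solving directly the HP equation attached to the inverse data (replace $\mathbf S$ by $\mathbf S^{*}$ and adjust the noise coefficients accordingly) to obtain a process $V_t$ and checking $V_tU_t = U_tV_t = I$. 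Either way the It\^o computation that must be made to close is governed by the co-isometry condition $L_j^i + (L_i^j)^{*} + \sum_{k=1}^{m} L_k^i (L_k^j)^{*} = 0$, which follows from $H = H^{*}$ and $\sum_k S_k^i (S_k^j)^{*} = \delta_j^i\, I$, i.e. $\mathbf S\mathbf S^{*} = I$; it is exactly here that the full force of hypothesis $(ii)$ --- unitarity of $\mathbf S$, not merely a contraction bound --- is used. This yields $U_t U_t^{*} = I$, so $U_t$ is unitary for every $t$, which together with the uniqueness already established completes the proof.
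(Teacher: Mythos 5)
The paper offers no proof of this theorem --- it is quoted from Parthasarathy's book \cite{krp1} as a known result --- so there is no in-paper argument to compare against; your sketch is precisely the standard proof from that source (Picard iteration with the fundamental estimates for existence and uniqueness on the exponential domain, the second fundamental formula together with $\mathbf{S}^*\mathbf{S}=I$ yielding the isometry relations $\theta_j^i(I)=0$, and the time-reversed dual cocycle together with $\mathbf{S}\mathbf{S}^*=I$ yielding co-isometry), and it is correct. It is worth noting that the paper itself leans on exactly your time-reversal device (its Proposition \ref{dual}) when it later upgrades the limit isometry to a unitary in Theorem \ref{main}.
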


\noindent Let  $(U_t)_{t\geq 0}$ be a  unitary process satisfying  \eqref{QSDE}. Then the  family of homomorphisms $\{J_t:t\ge 0\}$  defined by
$$J_t(X)= U_t^*(X\otimes I)U_t,~ X \in \B. $$ satisfies the QSDE
\begin{equation}
J_t (X)= X\otimes I+ \sum\limits_{i,j=0}^m\int\limits_0^tJ_s  \theta_j^i (X) \Lambda_i^j(ds)
\end{equation}
where $$ \theta _j^i (X)= XL_j^i+ {(L_i^j)}^* X+ \sum\limits_{k=1}^m  {(L_i^k)}^* X L_j^k, ~\forall ~i,j \geq 0.$$
In particular $\theta_0^0$ is given by,
\begin{equation}
 \label{theta00}
\theta _0^0 (X)= \sum\limits_{k=0}^m  L_k^* X L_k + XL_0^0+ {L_0^0}^* X,
\end{equation}
is the
generator of a QDS $(\mathcal{T}_t)_{t\geq 0}$ and the homomorphic co-cycle $J_t$ dilates $\mathcal{T}_t$ in the sense that
\begin{equation}
\langle u e(0), U_t^* (X\otimes I)U_t  v e(0)\rangle = \langle u, \T_t(X) v \rangle , \forall  u, v \in \HH   ~{\mbox{and}}~ X \in \B. \end{equation}
The QDS $\mathcal{T}_t$ is called the vacuum expectation of $J_t$. This homomorphic co-cycle  $J_t,$   implemented by the  HP  flow $U_t,$   is known as an  HP  dilation of  the QDS $\mathcal T_t.$\\
Consider the time reversal operator $R_t$ on $L^2(\R_+, \mathbf{k})$ defined by
\begin{equation}
R_t(f)(s) := \begin{cases}
f(t-s) & \text{if }~ s\leq t;\\
f(s) & \text{if }~ s>t.\\
\end{cases}
\end{equation}
Observe that  $R_t$ is a self-adjoint unitary. Thus the  second quantization $\Gamma({R_t})$ is so.  For a bounded process $U_t$, define the dual process $\tilde{U_t}$ by $$ \tilde{U_t}:= (1\otimes \Gamma({R_t})){U_t}^* (1\otimes \Gamma({R_t})). $$
\begin{proposition} \cite{kbs}
\noindent Let  $U_t$ be a bounded process satisfying  the QSDE \eqref{QSDE}. Then the dual process $\tilde{U_t}$ will satisfy the QSDE of the similar form given by,
$$ \tilde{U_t} = I+ \sum\limits_{i,j=0}^m\int\limits_0^t \tilde{U_s}  {L_i^j}^* \Lambda_i^j(ds). $$
\label{dual}
\end{proposition}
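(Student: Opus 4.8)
The plan is to reduce the claim to the uniqueness assertion contained in Theorem~\ref{ThmQSDE}: I will show that $\tilde U_t$ satisfies the displayed QSDE as an identity of matrix elements against exponential vectors, which suffices because the exponential vectors are total in $\HH\otimes\Gamma_{sym}$ and the solution of an HP type QSDE with bounded coefficients is unique. First I would take the formal adjoint of \eqref{QSDE}. Using that $\Lambda_i^j$ and $\Lambda_j^i$ are mutual adjoints and the adjoint rule for quantum stochastic integrals, one gets the \emph{backward} equation
\begin{equation*}
U_t^* = I + \sum_{i,j=0}^m \int_0^t (L_i^j)^*\, U_s^*\; \Lambda_i^j(ds),
\end{equation*}
whose coefficient family is the transposed-adjoint of the original one. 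This new family again satisfies the hypotheses of Theorem~\ref{ThmQSDE} — unitarity of $\sum S_j^i\otimes\vert e_i\rangle\langle e_j\vert$ in $\HH\otimes\mathbf{k}$ is preserved under taking adjoints — so the \emph{forward} QSDE with coefficients $\{(L_i^j)^*\}$ has a unique unitary solution $V_t$, and it is enough to prove $\tilde U_t=V_t$.

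Next I would compute $\langle u\,e(f), \tilde U_t\, v\,e(g)\rangle$ for $u,v\in\HH$ and $f,g\in L^2(\R_+,\mathbf{k})$. Since $\Gamma(R_t)$ is a self-adjoint unitary with $\Gamma(R_t)e(h)=e(R_t h)$, and $R_t$ is unitary,
\begin{equation*}
\langle u\,e(f), \tilde U_t\, v\,e(g)\rangle = \langle u\,e(R_t f),\, U_t^*\, v\,e(R_t g)\rangle = \overline{\langle v\,e(R_t g),\, U_t\, u\,e(R_t f)\rangle}.
\end{equation*}
I would then expand the right-hand side through the matrix-element form of \eqref{QSDE}, where the fundamental integrals $\int_0^t(\cdot)\,\Lambda_i^j(ds)$ produce the usual intensities built from the components $h^{(l)}(s)=\langle e_l,h(s)\rangle$ of the test functions. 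Using $R_t h(s)=h(t-s)$ for $s\le t$, the substitution $s\mapsto t-s$ on $[0,t]$, and $\langle R_t g, R_t f\rangle=\langle g,f\rangle$, this rewrites $\langle u\,e(f), \tilde U_t\, v\,e(g)\rangle$ as $\langle u,v\rangle e^{\langle f,g\rangle}$ plus a sum of integrals over $[0,t]$ whose integrands are $\langle u\,e(R_t f),\, (L_i^j)^* U_{t-s}^*\, v\,e(R_t g)\rangle$ against the intensities of $f$ and $g$ — the transposition of indices and the occurrence of $(L_i^j)^*$ coming exactly from the adjoint, and the ``$t-s$'' from the time reversal.

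The real obstacle, and the place where all the work sits, is to identify this reflected integrand $\langle u\,e(R_t f),\, (L_i^j)^* U_{t-s}^*\, v\,e(R_t g)\rangle$ with $\langle u\,e(f),\, \tilde U_s\,(L_i^j)^*\, v\,e(g)\rangle$, i.e.\ with the integrand required for the QSDE satisfied by $V_t$. The subtlety is that $R_t$ does \emph{not} restrict to $R_s$ on $L^2([0,s],\mathbf{k})$ — the reflection of $[0,t]$ carries $[0,s]$ onto $[t-s,t]$ — so the rewriting cannot be done by a naive restriction argument. I would resolve this by factorising $\Gamma_{sym}$ at the appropriate times (so that the exponential vectors and the adapted operator $U_{t-s}$ split as tensor products) and using the left cocycle identity for $U_\cdot$, which is a consequence of \eqref{QSDE}, to re-express $\Gamma(R_t)\,U_{t-s}^*\,\Gamma(R_t)$ in terms of $\Gamma(R_s)\,U_s^*\,\Gamma(R_s)=\tilde U_s$; the time, creation, annihilation and conservation index-cases are then handled in the same way, the intensity factors matching automatically after the substitution. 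Once this matching is established, the identity obtained in the previous paragraph is precisely the matrix-element form of $V_t = I + \sum_{i,j}\int_0^t V_s\,(L_i^j)^*\,\Lambda_i^j(ds)$, so by totality of exponential vectors and uniqueness we get $\tilde U_t = V_t$, which is the assertion. Note that boundedness of $\tilde U_t$, needed for the quantum stochastic integrals to make sense, is automatic, since $\tilde U_t$ is a product of a unitary and two self-adjoint unitaries.
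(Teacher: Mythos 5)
The paper itself offers no proof of this proposition (it is quoted from \cite{kbs}), so I am judging your argument on its own terms. Your skeleton is the standard and correct one: pass to the adjoint (``backward'') equation $U_t^*=I+\sum\int_0^t(L_i^j)^*U_s^*\Lambda_i^j(ds)$, check that the transposed-adjoint coefficient family again satisfies the structure relations of Theorem \ref{ThmQSDE} (indeed $\tilde S=S^*$ is unitary, $\tilde L_i=-\sum_k (S_i^k)^*L_k$, $\tilde H=-H$), and then identify $\tilde U_t$ with the unique solution $V_t$ of the forward equation with coefficients $(L_i^j)^*$ by computing matrix elements on exponential vectors and using the reflection $s\mapsto t-s$.

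The gap is at the step you yourself flag as the crux. The pointwise identity
$$\langle u\,e(R_tf),\,(L_i^j)^*U_{t-s}^*\,v\,e(R_tg)\rangle \;=\; \langle u\,e(f),\,\tilde U_s(L_i^j)^*\,v\,e(g)\rangle$$
is false, so no amount of factorising and cocycle manipulation will establish it. Test it at $s=t$: since $U_0=I$ and $\langle R_tf,R_tg\rangle=\langle f,g\rangle$, the left side is $\langle L_i^ju,v\rangle e^{\langle f,g\rangle}$, whereas the right side is $\langle u\,e(f),\tilde U_t(L_i^j)^*v\,e(g)\rangle$; these differ for any nontrivial cocycle (similarly at $s=0$). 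What the factorisation and the cocycle identity actually give is $\Gamma(R_t)U_{t-s}^*\Gamma(R_t)=$ the second-quantized shift by $s$ of $\tilde U_{t-s}$, which equals $\tilde U_s^*\tilde U_t$, so the reflected integrand becomes $\langle \tilde U_sL_i^ju\,e(f),\tilde U_t v\,e(g)\rangle$ --- an implicit relation that still contains $\tilde U_t$ under the integral, not the integrand of the forward equation for $V_t$. Only the \emph{integrated} identity (summed over $i,j$ and integrated over $[0,t]$) is true, and it cannot be extracted from the first-order expansion alone. The standard repair is to iterate the backward equation to all orders: the $n$-th term is an integral over the simplex $\{0<s_1<\cdots<s_n<t\}$ with the operators $(L_{i_k}^{j_k})^*$ composed in time order, and the reflection $s_k\mapsto t-s_k$ reverses the simplex and hence the operator ordering, turning the series term by term into the Picard expansion of $V_t=I+\sum\int_0^tV_s(L_i^j)^*\Lambda_i^j(ds)$; alternatively, prove that $\tilde U$ is a contraction cocycle and identify its generator matrix via its associated semigroups. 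Either route replaces, rather than completes, the first-order integrand matching on which your write-up rests. (A minor further point: the proposition assumes $U_t$ bounded, not unitary, so $\tilde U_t$ is merely bounded; your closing sentence overstates this, though boundedness is all that is needed for the integrals to make sense.)
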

\section{Examples of  Quantum Dynamical Semi-groups}
In this section, we have  constructed a class  of
formal Lindbladian  on the GNS space of a  UHF $C^*$-algebra $\A.$  In Theorem \ref{main}  we will  show that  the associated  HP equation   admits a unitary  solution.

\noindent Let us consider the UHF $C^*$-algebra $\A$ as the $C^*$-inductive limit of the infinite tensor product of the matrix algebra  $M_N(\mathbb{C})$,
$$\A = \overline{\bigotimes\limits_{j \in \Z^d} M_N(\C)}^{c^*}.$$ The algebra $\A$ can be interpreted as inductive limit of full matrix algebras.
 For $x \in M_N(\C)$ and
$j \in \Z^d$, $x^{(j)}$ denotes an element of $\A$ with $x$ in the $j^{th}$ component and  identity everywhere else.
We shall call the elements of the form $\prod_{i\geq 1} x_i^{(j_i)}$ to be simple tensor elements in $\A$.
For a simple tensor element $x$ in $\A$, let $x_{(j)}$ be the $j^{th}$ component of $x$. Support `$supp(x)$'  of $x$ is defined to be the subset $\{j \in \Z^d; x_{(j)} \neq I \} $.
For a general element $x \in \A$ such that $x=\sum_{n=1}^\infty c_nx_n $ with  simple tensor elements $x_n$ and complex coefficients $c_n$, define $supp(x)=
\bigcup_{n\geq1} supp(x_n).$
For any $\Delta \subset \Z^d$, let $\A_\Delta$ denotes the $\ast$-sub algebra
generated by the elements of $\A$ with support in  $\Delta$. For $j = (j_1, j_2,\cdots , j_d ) \in \Z^d$, define $\Vert j\Vert= max\{\vert j_i\vert ~;~ 1\leq i \leq d  \}$ and set $\Delta_n = \{j\in \Z^d ; \Vert j \Vert \leq n\}$,  $\partial\Delta_n = \{j\in \Z^d ; \Vert j \Vert =n\}$.
We say an element $x\in \A$ is  local if $x\in \A_{\Delta_p}$  for some  $p \geq 1.$
The unique normalized trace $tr$ on $\A$ is given by $tr(x)=\dfrac{1}{N^n}Tr(x)$, for $x \in M_{N^n}(\C)$,
where $Tr$ denotes the matrix trace. The trace $tr$ is a faithful normal state on $\A$. The algebra $\A$ can be represented as vectors in the Hilbert space $\HH=L^2(\A, tr)$, the GNS Hilbert space for $(\A,tr)$, and as an element of  $\B$ by left multiplication. We write $\A_{loc}$  for   the dense $*$-algebra generated by local elements.

Consider a formal element of the type  $$r:=\sum\limits_{n=1}^\infty W_n { \mbox{ such that }}   \sum\limits_{n=1}^\infty \|W_n \|=\infty,  $$
where each  $W_n$   belongs to $\A_{\partial\Delta_n}.$
Let us denote formally $$ \sum\limits_{n=1}^\infty W_n^*  \mbox{ by }  r^*.$$

\noindent Now, if we set $\CC(x)=[r,x]=\sum\limits_{n=1}^\infty   [W_n,x]$ for $x\in \A_{loc},$ clearly it is well defined since  $[W_n,x]=0$  for all $n> m$ when $x$
is in finite dimensional algebra
$ \mathcal A_{\Delta_m} \subseteq \A_{loc}.$ Thus we have a densely defined linear operator $(\mathcal{C}_r, \A_{loc})$ in $\HH.$
\begin{lemma}
Let $r$ be as above and  $n\ge 1.$ Consider the element $ r_n=\sum\limits_{k=1}^n W_k$ in $\A$ and define a bounded operator $\CC^{(n)}$ on
$\HH$ by setting $\CC^{(n)}(x)=[r_n,x]=\sum\limits_{k=1}^n   [W_k,x]$ for $x \in \A_{loc}.$
Then for each $n \geq 1$, $\A_{\Delta_n}$ is an invariant subspace for  $\CC$ and  $\CC^{(n)}$.
Also for $m\ge p$,
\begin{equation}
\CC|_{\A_{\Delta_p}}= \CC^{(m)}|_{\A_{\Delta_p}}=\CC^{(p)}|_{\A_{\Delta_p}}.
\label{crnm}
\end{equation}

\label{Lemma}
\end{lemma}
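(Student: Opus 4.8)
The plan is to exploit the local structure of $\A$ recorded just above the statement: elements of $\A$ supported on disjoint subsets of $\Z^d$ commute, and each $\A_{\Delta_n}$ is a finite-dimensional $*$-subalgebra of $\A$. Concretely, I would first isolate two facts. Since $\Delta_n$ is a finite subset of $\Z^d$, the algebra $\A_{\Delta_n}$ is $*$-isomorphic to the full matrix algebra $M_{N^{|\Delta_n|}}(\C)$; in particular it is finite-dimensional, hence its image under the faithful GNS representation is a closed subspace of $\HH$, and it is closed under multiplication. Secondly, if $a,b\in\A$ satisfy $supp(a)\cap supp(b)=\emptyset$ then $ab=ba$; applying this with $a=W_k$ (so $supp(W_k)\subseteq\partial\Delta_k$) and $b=x\in\A_{\Delta_p}$ (so $supp(x)\subseteq\Delta_p$) yields $[W_k,x]=0$ whenever $k>p$, because $\partial\Delta_k\cap\Delta_p=\emptyset$ for $k>p$.

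Granting these, invariance is quick. Fix $x\in\A_{\Delta_n}$. For $\CC^{(n)}$, note $r_n=\sum_{k=1}^n W_k\in\A_{\Delta_n}$ since $\partial\Delta_k\subseteq\Delta_n$ for $1\le k\le n$, whence $\CC^{(n)}(x)=r_n x-x r_n\in\A_{\Delta_n}$ by the first fact. For $\CC$, the second fact shows $[W_k,x]=0$ for every $k>n$, so the defining series for $\CC(x)$ actually terminates and $\CC(x)=\sum_{k=1}^n[W_k,x]=\CC^{(n)}(x)\in\A_{\Delta_n}$; this also exhibits $\CC|_{\A_{\Delta_n}}$ as the restriction of the bounded operator $\CC^{(n)}$. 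The displayed identities \eqref{crnm} come out of the same truncation: for $m\ge p$ and $x\in\A_{\Delta_p}$, all terms of index greater than $p$ vanish in each of $\CC(x)$, $\CC^{(m)}(x)$ and $\CC^{(p)}(x)$, so all three reduce to $\sum_{k=1}^p[W_k,x]$.

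I do not expect a genuine obstacle: the lemma is essentially bookkeeping with supports. The two points that need a small amount of care are the meaning of $supp(x)$ for an $x\in\A_{\Delta_p}$ that is not a simple tensor --- handled either by reducing to simple tensors by linearity and continuity, or more simply by reading $supp(x)\subseteq\Delta_p$ off the definition of $\A_{\Delta_p}$ --- and the (routine) remark that $\A_{\Delta_n}$ is finite-dimensional, so that it really is a closed subspace of $\HH$ and the phrase ``invariant subspace'' has its usual meaning.
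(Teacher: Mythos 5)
Your proposal is correct and follows essentially the same route as the paper: the whole lemma rests on the observation that $W_k\in\A_{\partial\Delta_k}$ and $x\in\A_{\Delta_p}$ have disjoint supports for $k>p$, so $[W_k,x]=0$ and the defining series truncates to $\sum_{k=1}^p[W_k,x]\in\A_{\Delta_p}$. You merely spell out the support bookkeeping and the finite-dimensionality of $\A_{\Delta_n}$ in more detail than the paper does.
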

\begin{proof}
For $x$ is in $\A_{\Delta_n}, [W_k,x]=0$ for $k >n$. Thus $[r,x] = [r_n,x] \in \A_{\Delta_n} $ and $\A_{\Delta_n}$ is an invariant subspace under
$\CC$ and  $\CC^{(n)}$. Now for   $x\in  \mathcal A_{\Delta_p}$ and $m\ge p,$  it is easy to  see that
$ \CC(x)= \CC^{(m)}(x)=\CC^{(p)}(x).$
\end{proof}
\begin{proposition}
The operator $(\CC, \A_{loc})$ is closable.
\label{Prop1}
\end{proposition}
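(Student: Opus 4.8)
The plan is to prove closability by exhibiting a densely defined operator that is contained in the adjoint of $(\CC,\A_{loc})$; recall that a densely defined operator whose adjoint is densely defined is automatically closable (its closure being $\bigl((\CC|_{\A_{loc}})^*\bigr)^*$). The natural candidate is the commutator operator $\CC^{*}$ built from the formal element $r^*=\sum_{n\ge 1}W_n^*$, taken with the same domain $\A_{loc}$: for $y\in\A_{loc}$ set $\CC^{*}(y):=[r^*,y]=\sum_{k\ge 1}[W_k^*,y]$. First I would record that this is well defined, by repeating the argument of Lemma \ref{Lemma} verbatim with $r^*$ in place of $r$: if $y\in\A_{\Delta_p}$, then for $k>p$ the element $W_k^*\in\A_{\partial\Delta_k}$ and $y$ act on disjoint tensor factors, so $[W_k^*,y]=0$ and $\CC^{*}(y)=[r_p^*,y]$ with $r_p^*=\sum_{k=1}^p W_k^*\in\A$ a genuine (bounded) element. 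Since $\A_{loc}=\bigcup_p\A_{\Delta_p}$ is dense in $\HH$, the operator $(\CC^{*},\A_{loc})$ is densely defined.

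Next I would verify the duality relation
$$\langle \CC(x),y\rangle=\langle x,\CC^{*}(y)\rangle\qquad\text{for all }x,y\in\A_{loc}.$$
Given $x,y\in\A_{loc}$, choose $p$ with $x,y\in\A_{\Delta_p}$; then by Lemma \ref{Lemma} we have $\CC(x)=r_px-xr_p$ and $\CC^{*}(y)=r_p^*y-yr_p^*$, so the whole computation takes place inside the $C^*$-algebra $\A$ with the single bounded element $r_p=\sum_{k=1}^p W_k$, and no convergence issue arises. Using $\langle a,b\rangle=tr(a^*b)$ on $\HH=L^2(\A,tr)$ together with the tracial (cyclicity) property of $tr$,
$$\langle r_px-xr_p,\,y\rangle=tr\bigl(x^*r_p^*y\bigr)-tr\bigl(r_p^*x^*y\bigr)=tr\bigl(x^*r_p^*y\bigr)-tr\bigl(x^*yr_p^*\bigr)=tr\bigl(x^*(r_p^*y-yr_p^*)\bigr),$$
which is precisely $\langle x,\CC^{*}(y)\rangle$. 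Note that the hypothesis $\|W_n\|<\infty$ is used only to guarantee $r_p\in\A$; the divergence of $\sum_n\|W_n\|$ is irrelevant here because locality truncates both sums to finitely many terms.

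From this identity we get $\CC^{*}|_{\A_{loc}}\subseteq(\CC|_{\A_{loc}})^*$, hence the adjoint of $(\CC,\A_{loc})$ is densely defined and $(\CC,\A_{loc})$ is closable. I do not anticipate a genuine obstacle: the only point requiring care is the status of the formal series $r$, and this is already settled by Lemma \ref{Lemma}, which shows that $\CC$ and its formal adjoint $\CC^{*}$ act through the finite partial sums $r_p$; once this is in place, the proof is just the tracial computation above — the same one that shows the operator $L_a-R_a$ of left-minus-right multiplication has adjoint $L_{a^*}-R_{a^*}$ in the standard form of a tracial von Neumann algebra.
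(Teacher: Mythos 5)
Your proposal is correct and follows essentially the same route as the paper: both arguments show that the commutator operator $\mathcal{C}_{r^*}$ with domain $\A_{loc}$ is contained in the Hilbert-space adjoint of $(\CC,\A_{loc})$, using the locality truncation of Lemma \ref{Lemma} to reduce to the bounded element $r_p$ and then the traciality of $tr$ to move $r_p^*$ across the inner product, so the adjoint is densely defined and $\CC$ is closable. The only cosmetic difference is that the paper phrases the key step as boundedness of the functional $y\mapsto\langle x,\CC y\rangle$, while you verify the duality identity directly; these are the same computation.
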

\begin{proof}
We shall show that $\A_{loc} \subseteq Dom(\mathcal{C}_{r}^*)$  and  for $x\in \A_{loc}, ~ \CC^*(x)=\mathcal{C}_{r^*}(x) = [r^*,x]$, thereby showing that the operator $\CC^*$ is densely defined and therefore  $(\CC, \A_{loc})$ is closable.  Indeed for  $x\in \A_{loc}$, there exists
$p \geq 1$ such that $ x \in \A_{\Delta_p}$. Define
$\Phi_x(y):= \left\langle x, \CC y\right\rangle \forall y \in \A_{loc}$. For   each $ y \in \A_{loc}$, there exists $m$ such that $ y \in \A_{\Delta_m}$.
As $ \{\A_{\Delta_n}\}$ is an increasing family of algebras, with no loss of generality, let us assume $m \geq p$.
Then by definition and property of trace and Lemma \ref{Lemma},
$$\Phi_x(y)= tr(x^*\CC y) = tr(x^*\CC^{(m)}y) =tr({(\mathcal{C}_{r^*}^{(m)}x)}^*y)
= \langle\mathcal{C}_{r^*}^{(p)}x, y \rangle = \langle\mathcal{C}_{r^*}x, y \rangle,$$ and thus $$|\Phi_x(y)| \leq \Vert \mathcal{C}_{r^*}x\Vert \Vert y \Vert , ~\forall ~ y\in \A_{loc}.$$
Thus  $x \in Dom(\CC^*)$ and
\begin{equation}
 \CC^*(x)= \mathcal{C}_{r^*}(x),  \forall x \in \A_{loc}.
 \label{cr*}
\end{equation}

\end{proof}

\noindent We denote by $\bar{\CC}$, the closure of a densely defined, closable operator $\CC$. Note here that for a operator $T$ on $\HH$, $T^*=\bar{T}^*,$ if $T$ is closable. Then by standard theorem of von Neumann, $\CC^*\bar{\CC}$ is a positive self-adjoint operator in $\HH$ and $Dom(\CC^*\bar{\CC})$ is a core for $\bar{\CC}$. Furthermore, the operator $G := -\dfrac{1}{2}\CC^*\bar{\CC}$ generates a $C_0$-contraction semi-group $\Ss_t$ in $\HH.$\\

\begin{proposition}
For  $n\ge 1,$ define   the bounded operator $G^{(n)}$ on $\HH$ by
$$ G^{(n)}:=
-\frac{1}{2} \mathcal{C}_{r^*}^{(n)}\CC^{(n)}.$$  Then each
$\A_{\Delta_n}$ is an invariant under  $G^{(n)}.$  Furthermore,
\begin{equation}
 G^{(m)}|_{\A_{\Delta_p}}=G^{(p)}|_{\A_{\Delta_p}} =  G|_{\A_{\Delta_p}}  \mbox{ if } m\geq p,
 \label{Restriction}
\end{equation}
\end{proposition}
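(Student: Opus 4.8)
The plan is to reduce the whole statement to Lemma \ref{Lemma}, Proposition \ref{Prop1} and the identity \eqref{cr*}, after first recording the obvious \emph{adjoint analogue} of Lemma \ref{Lemma}. Since each $W_k^*$ again belongs to $\A_{\partial\Delta_k}$, the formal element $r^*=\sum_k W_k^*$ and its truncations $r_n^*=\sum_{k=1}^n W_k^*$ have exactly the same structure as $r$ and $r_n$; so repeating the proof of Lemma \ref{Lemma} verbatim with $r$ replaced by $r^*$ shows that each $\A_{\Delta_n}$ is invariant under $\mathcal{C}_{r^*}$ and under $\mathcal{C}_{r^*}^{(n)}$, and that
\begin{equation*}
\mathcal{C}_{r^*}\big|_{\A_{\Delta_p}}=\mathcal{C}_{r^*}^{(m)}\big|_{\A_{\Delta_p}}=\mathcal{C}_{r^*}^{(p)}\big|_{\A_{\Delta_p}}\qquad (m\ge p).
\end{equation*}
I would state this as a one-line remark before beginning the argument proper.

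With this in hand, invariance of $\A_{\Delta_n}$ under $G^{(n)}$ is immediate: for $x\in\A_{\Delta_n}$, Lemma \ref{Lemma} gives $\CC^{(n)}(x)\in\A_{\Delta_n}$, and the adjoint analogue then gives $\mathcal{C}_{r^*}^{(n)}\big(\CC^{(n)}(x)\big)\in\A_{\Delta_n}$, so $G^{(n)}(x)=-\tfrac12\mathcal{C}_{r^*}^{(n)}\CC^{(n)}(x)\in\A_{\Delta_n}$. For the identity $G^{(m)}|_{\A_{\Delta_p}}=G^{(p)}|_{\A_{\Delta_p}}$ with $m\ge p$, I would fix $x\in\A_{\Delta_p}$ and first use \eqref{crnm} to write $y:=\CC^{(m)}(x)=\CC^{(p)}(x)$, which by Lemma \ref{Lemma} lies in $\A_{\Delta_p}$; applying the adjoint analogue of \eqref{crnm} to this $y$ gives $\mathcal{C}_{r^*}^{(m)}(y)=\mathcal{C}_{r^*}^{(p)}(y)$, and composing yields $G^{(m)}(x)=-\tfrac12\mathcal{C}_{r^*}^{(m)}\CC^{(m)}(x)=-\tfrac12\mathcal{C}_{r^*}^{(p)}\CC^{(p)}(x)=G^{(p)}(x)$.

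It then remains to identify this common restriction with $G|_{\A_{\Delta_p}}$, where $G=-\tfrac12\CC^*\bar{\CC}$. Here I would first check that $\A_{\Delta_p}\subseteq Dom(\CC^*\bar{\CC})$: for $x\in\A_{\Delta_p}\subseteq\A_{loc}$ we have $\bar{\CC}x=\CC x=\CC^{(p)}x$, and by Lemma \ref{Lemma} this vector again lies in $\A_{\Delta_p}\subseteq\A_{loc}\subseteq Dom(\CC^*)$, the last inclusion by Proposition \ref{Prop1}. Then, using \eqref{cr*} followed by the adjoint analogue of \eqref{crnm}, $\CC^*\bar{\CC}x=\CC^*(\CC^{(p)}x)=\mathcal{C}_{r^*}(\CC^{(p)}x)=\mathcal{C}_{r^*}^{(p)}(\CC^{(p)}x)$, so $Gx=-\tfrac12\mathcal{C}_{r^*}^{(p)}\CC^{(p)}x=G^{(p)}x$, which finishes the proof.

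I do not expect a genuine obstacle: the argument is bookkeeping with supports, and the only real input is the adjoint analogue of Lemma \ref{Lemma} together with the fact that the truncations $\CC^{(p)}$ map $\A_{\Delta_p}$ into itself, which is exactly what lets the two commutators be composed without leaving $\A_{\Delta_p}$. The one point that needs (mild) care — the main thing to get right — is the passage to the closure $\bar{\CC}$ and the inclusion $\A_{\Delta_p}\subseteq Dom(\CC^*\bar{\CC})$, which rests on $\A_{\Delta_p}$ being finite-dimensional, hence contained in $\A_{loc}\subseteq Dom(\CC^*)$, and invariant under $\CC$.
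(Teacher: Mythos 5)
Your proof is correct and follows essentially the same route as the paper: the paper likewise combines Lemma \ref{Lemma} with the identity \eqref{cr*} and the (implicit) adjoint analogue of \eqref{crnm} for $\mathcal{C}_{r^*}$ to get $\CC^*|_{\A_{\Delta_p}}=\mathcal{C}_{r^*}^{(m)}|_{\A_{\Delta_p}}=\mathcal{C}_{r^*}^{(p)}|_{\A_{\Delta_p}}$ and then composes. The only difference is that you make explicit two points the paper glosses over --- the verbatim repetition of Lemma \ref{Lemma} for $r^*$ and the verification that $\A_{\Delta_p}\subseteq Dom(\CC^*\bar{\CC})$ --- which is a welcome tightening rather than a new argument.
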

\begin{proof}
By Lemma \ref{Lemma}, we have $\A_{\Delta_n}$ invariant  under
$\bar{\CC}$ and  $\CC^{(n)}$ and  for  $m\ge p,$ the identity
$ \CC|{\A_{\Delta_p}}= \CC^{(m)}|{\A_{\Delta_p}}=\CC^{(p)}|{\A_{\Delta_p}}$ holds. As
$ \CC^*(x)= \mathcal{C}_{r^*}(x),  ~\forall~ x \in \A_{loc} ,$   we have
$ \CC^*|{\A_{\Delta_p}}=    \mathcal{C}_{r^*}^{(m)}|_{\A_{\Delta_p}}=   \mathcal{C}_{r^*}^{(p)} |_{\A_{\Delta_p}}$
and  hence result follows.

\end{proof}
\begin{proposition}
The  subspace $\A_{loc}$ is a core for the  operator $G.$
\label{Prop4}
\end{proposition}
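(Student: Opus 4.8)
The plan is to invoke the standard core criterion for the generator of a $C_0$-semi-group: a subspace $D\subseteq Dom(G)$ is a core for $G$ if and only if $(\lambda I-G)D$ is dense in $\HH$ for some $\lambda$ in the resolvent set of $G$. Since $G=-\tfrac12\CC^*\bar\CC$ is negative self-adjoint and generates the $C_0$-contraction semi-group $\Ss_t$, every $\lambda>0$ lies in its resolvent set, so I fix one such $\lambda$ and aim to show that $(\lambda I-G)\A_{loc}$ is dense, after first checking $\A_{loc}\subseteq Dom(G)$.

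The inclusion $\A_{loc}\subseteq Dom(G)=Dom(\CC^*\bar\CC)$ follows from the earlier results. For $x\in\A_{\Delta_p}$ one has $x\in Dom(\CC)\subseteq Dom(\bar\CC)$, and by Lemma \ref{Lemma}, $\bar\CC x=\CC x=\CC^{(p)}x\in\A_{\Delta_p}\subseteq\A_{loc}\subseteq Dom(\CC^*)$, the last inclusion being \eqref{cr*}; recalling $\CC^*=(\bar\CC)^*$ for closable $\CC$, this gives $x\in Dom(\CC^*\bar\CC)$. Since $\A_{loc}=\bigcup_{p\ge1}\A_{\Delta_p}$ is an increasing union of $*$-algebras, we conclude $\A_{loc}\subseteq Dom(G)$.

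The heart of the matter is that on each $\A_{\Delta_p}$ the operator $G$ reduces to a bounded operator on a finite-dimensional invariant subspace. Indeed, by \eqref{Restriction} we have $(\lambda I-G)|_{\A_{\Delta_p}}=(\lambda I-G^{(p)})|_{\A_{\Delta_p}}$, and $G^{(p)}=-\tfrac12\,\mathcal{C}_{r^*}^{(p)}\CC^{(p)}=-\tfrac12\,{\CC^{(p)}}^{*}\CC^{(p)}$, using the adjoint identity ${\CC^{(p)}}^{*}=\mathcal{C}_{r^*}^{(p)}$ coming from the trace computation in the proof of Proposition \ref{Prop1}; hence $G^{(p)}$ is a bounded, self-adjoint, negative operator on $\HH$, so $\lambda I-G^{(p)}$ is injective on all of $\HH$ for each $\lambda>0$. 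Since $G^{(p)}$ leaves the finite-dimensional space $\A_{\Delta_p}$ invariant, $(\lambda I-G^{(p)})|_{\A_{\Delta_p}}$ is an injective linear endomorphism of $\A_{\Delta_p}$ and is therefore surjective onto it. Thus $(\lambda I-G)\A_{\Delta_p}=\A_{\Delta_p}$ for every $p\ge1$, and taking the increasing union yields $(\lambda I-G)\A_{loc}=\A_{loc}$, which is dense in $\HH$. The core criterion then gives the assertion.

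The one point that needs care is the identity $(\lambda I-G)\A_{\Delta_p}=\A_{\Delta_p}$: it is the finite-dimensionality of $\A_{\Delta_p}$ together with its $G^{(p)}$-invariance that promotes injectivity of $\lambda I-G^{(p)}$ to surjectivity \emph{onto $\A_{\Delta_p}$} (mere density of the range would not close the argument), and it is the negativity $G^{(p)}\le 0$ that lets a single $\lambda>0$ serve all $p$ simultaneously; the rest is bookkeeping built on Lemma \ref{Lemma} and Proposition \ref{Prop1}. An equivalent route replaces the resolvent by the semi-group: for $x\in\A_{\Delta_p}$ the finite-dimensional curve $t\mapsto e^{tG^{(p)}}x$ stays in $\A_{\Delta_p}\subseteq Dom(G)$ and, by \eqref{Restriction}, solves $\dot u(t)=Gu(t)$ with $u(0)=x$, so by uniqueness of the abstract Cauchy problem $\Ss_t x=e^{tG^{(p)}}x\in\A_{\Delta_p}\subseteq\A_{loc}$; invariance of the dense subspace $\A_{loc}$ of $Dom(G)$ under $\Ss_t$ then delivers the core property via the semi-group form of the criterion.
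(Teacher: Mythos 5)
Your proof is correct, but your main line of argument is genuinely different from the paper's. The paper proves the core property by showing that $\A_{loc}$ is invariant under the semi-group $\Ss_t$: for $x\in\A_{\Delta_n}$ the exponential series $\sum_k t^kG^kx/k!=\sum_k t^k(G^{(n)})^kx/k!$ converges inside the finite-dimensional space $\A_{\Delta_n}$, so $\Ss_tx=e^{tG^{(n)}}x\in\A_{\Delta_n}$, and then the standard invariance criterion (cited there via Nelson) applies. You instead use the resolvent-range criterion, showing $(\lambda I-G)\A_{loc}$ is dense by the clean observation that $\lambda I-G^{(p)}$ is an injective, hence surjective, endomorphism of the finite-dimensional invariant subspace $\A_{\Delta_p}$, so that in fact $(\lambda I-G)\A_{loc}=\A_{loc}$. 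All the ingredients you use are available in the paper: $\A_{loc}\subseteq Dom(G)$ follows as you say from \eqref{cr*} and Lemma \ref{Lemma}, the identity $({\CC^{(p)}})^*=\mathcal{C}_{r^*}^{(p)}$ is exactly the trace computation in Proposition \ref{Prop1} applied to the bounded element $r_p$, and \eqref{Restriction} transfers everything from $G^{(p)}$ to $G$. Your approach buys a slightly more quantitative conclusion (exact identification of the range of $\lambda I-G$ on $\A_{loc}$, with a single $\lambda>0$ working for all $p$ thanks to $G^{(p)}\le 0$), while the paper's route additionally establishes the explicit formula $\Ss_tx=e^{tG^{(n)}}x$ on $\A_{\Delta_n}$, which is reused implicitly in the compatibility arguments of Theorem \ref{main}. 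The alternative route you sketch at the end, via uniqueness of the abstract Cauchy problem and semi-group invariance of $\A_{loc}$, is essentially the paper's proof.
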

\begin{proof}
It is enough to  prove that the  subspace $\A_{loc}$  is invariant under the semi-group $\Ss_t.$ For  a vector
$x \in \A_{loc}$, there exists $n \geq 1$, such that $x \in {\A_{\Delta_n}}$.
Now by Lemma \ref{Lemma},  for any $k \geq 0$, $ G^k (x)= {G^{(n)}}^k (x) \in \mathcal A_{\Delta_n}$  and
it follows that the series  $\sum_{k\geq 0}  \frac{t^kG^k x}{k!}$  converges strongly in  $\mathcal A_{\Delta_n}.$
Therefore, we have,   $\Ss_tx= \Ss_t^{(n)}x \equiv  e^{tG^{(n)}}$  for $ x \in\A_{\Delta_n}.$ Thus, $\Ss_t$ leaves  $\A_{loc}$
invariant and  by Nelson's theorem \cite{nelson}, the core property follows.
\end{proof}

\noindent Now consider the sesquilinear form, Lindbladian,
$\LL(X)$ with the domain $ \A_{loc} \times \A_{loc} \subseteq Dom(G) \times Dom(G)$ given by
\begin{equation}
\langle u,\LL(X)v\rangle  \equiv \langle u,XGv \rangle+\langle Gu,Xv \rangle + \langle \bar{\CC} u,X \bar{\CC} v \rangle.
\label{Eq3}
\end{equation}
By definition of $G, $ it is clear that $\langle u,\LL(I)v\rangle =\langle u,Gv \rangle+\langle Gu,v \rangle + \langle \bar{\CC} u, \bar{\CC} v \rangle=0.$

Let $\A_{loc}\otimes\mathcal{E}$ be the linear span of $ \{x\otimes e(f)~:~ x \in \A_{loc},~ f \in L^2(\R_+, \C)\}$. Then the set $\A_{loc}\otimes \mathcal{E}$ is a dense subspace of  $\HH\otimes\Gamma_{sym} $.

\begin{theorem}
\label{main}
Consider the HP type QSDE in $\A_{loc}\otimes\mathcal{E}$
\begin{equation}
U_t = I+ \int\limits_0^t U_sGds + \int\limits_0^t U_s \bar{\CC} a^\dagger (ds) - \int\limits_0^t U_s \CC^* a(ds),
\label{Eq6}
\end{equation}
where $a^\dagger, a$ are creation and annihilation processes respectively. The QSDE \eqref{Eq6}
admits a unitary solution $U_t$. Moreover, the expectation semi-group $(\T_t)_{t\geq0}$ of the homomorphic co-cycle
$J_t(X)= U_t^*(X\otimes I)U_t$ is the unique (minimal) semi-group associated with the formal Lindbladian $\LL$ in \eqref{Eq3} and is conservative.

\end{theorem}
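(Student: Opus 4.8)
The plan is to exploit the local structure of the UHF algebra to reduce the unbounded equation \eqref{Eq6} to a consistent family of \emph{bounded}-coefficient HP equations, one on each finite-dimensional block $\A_{\Delta_n}$, and then apply Theorem \ref{ThmQSDE} blockwise. The first observation is that the truncated commutator maps satisfy $(\CC^{(n)})^{*}=\mathcal{C}_{r^{*}}^{(n)}$ (adjoint of a commutator, using the trace property), so $G^{(n)}=-\tfrac12\,\mathcal{C}_{r^{*}}^{(n)}\CC^{(n)}=-\tfrac12(\CC^{(n)})^{*}\CC^{(n)}$ is self-adjoint with $\mathrm{Re}\,G^{(n)}=-\tfrac12(\CC^{(n)})^{*}\CC^{(n)}$; hence the bounded-coefficient equation obtained from \eqref{Eq6} by replacing $G,\bar\CC,\CC^{*}$ by $G^{(n)},\CC^{(n)},(\CC^{(n)})^{*}$ satisfies the hypotheses of Theorem \ref{ThmQSDE} with $H=0$, $S=I$, $L_{1}=\CC^{(n)}$, so it has a unique unitary solution $U_{t}^{(n)}$ on $\HH\otimes\Gamma_{sym}$. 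By Lemma \ref{Lemma} together with \eqref{crnm} and \eqref{Restriction}, each $\A_{\Delta_p}$ is invariant under $G^{(n)},\CC^{(n)},(\CC^{(n)})^{*}$, and for $m\ge p$ these coefficients restricted to $\A_{\Delta_p}$ do not depend on the truncation index. Consequently $\A_{\Delta_p}\otimes\Gamma_{sym}$ reduces every $U_{t}^{(m)}$ with $m\ge p$, and by the uniqueness assertion of Theorem \ref{ThmQSDE} applied to the QSDE on that reducing subspace, $U_{t}^{(m)}$ and $U_{t}^{(p)}$ agree there. This lets me set $U_{t}\xi:=U_{t}^{(n)}\xi$ for $\xi\in\A_{\Delta_n}\otimes\mathcal{E}$; the definition is unambiguous, $U_{t}$ solves \eqref{Eq6} on $\A_{loc}\otimes\mathcal{E}$ (pass the integral equations to the limit), and since $\|U_{t}\xi\|=\|U_{t}^{(n)}\xi\|=\|\xi\|$ it extends to an isometry of $\HH\otimes\Gamma_{sym}$ (recall $\A_{loc}=\bigcup_n\A_{\Delta_n}$ is dense in $\HH$ and the blocks $\A_{\Delta_n}$ are finite-dimensional, hence closed).

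To see that $U_{t}$ is in fact unitary, I would run the same construction for the dual coefficients, which are again among $\{G^{(n)},(\CC^{(n)})^{*},\CC^{(n)}\}$ and hence again leave every $\A_{\Delta_p}$ invariant: this yields a consistent family $\tilde U_{t}^{(n)}$ of unitaries whose limit $\tilde U_{t}$ is an isometry of $\HH\otimes\Gamma_{sym}$, and Proposition \ref{dual} identifies $U_{t}^{*}=(1\otimes\Gamma(R_{t}))\,\tilde U_{t}\,(1\otimes\Gamma(R_{t}))$ on the common core, so $U_{t}^{*}$ is isometric and $U_{t}U_{t}^{*}=I$. (Equivalently, since $\A_{\Delta_n}\otimes\Gamma_{sym}$ is invariant under both $U_{t}^{(n)}$ and $(U_{t}^{(n)})^{*}$, each $U_{t}^{(n)}$ restricts to a unitary of $\A_{\Delta_n}\otimes\Gamma_{sym}$, so $U_{t}$ maps the dense set $\bigcup_n\A_{\Delta_n}\otimes\Gamma_{sym}$ onto itself; an isometry with dense range is unitary.) This proves the first assertion.

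Granting unitarity of $U_{t}$, conservativity of the vacuum expectation is immediate: $\langle u\,e(0),\,U_{t}^{*}(I\otimes I)U_{t}\,v\,e(0)\rangle=\langle u\,e(0),v\,e(0)\rangle=\langle u,v\rangle$, so $\T_{t}(I)=I$. To identify $\T_{t}$ with the minimal semigroup of $\LL$, I would feed the cocycle equation $J_{t}(X)=X\otimes I+\sum_{i,j}\int_{0}^{t}J_{s}\,\theta_{j}^{i}(X)\,\Lambda_{i}^{j}(ds)$ into the vacuum expectation: the creation, annihilation and number terms drop out and one is left with $\langle u,\T_{t}(X)v\rangle=\langle u,Xv\rangle+\int_{0}^{t}\langle u,\T_{s}(\theta_{0}^{0}(X))v\rangle\,ds$ for $u,v\in\A_{loc}$, where by \eqref{theta00} one checks $\langle u,\theta_{0}^{0}(X)v\rangle=\langle\bar\CC u,X\bar\CC v\rangle+\langle u,XGv\rangle+\langle Gu,Xv\rangle=\langle u,\LL(X)v\rangle$. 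Thus $\T_{t}$ is a $C_{0}$-semigroup of normal completely positive contractions whose generator form-extends $\LL$; iterating the last identity expresses $\T_{t}(X)$, for $X\ge 0$, as the increasing limit of exactly the approximating iterates that define the minimal semigroup associated with $\LL$ (equivalently, the unique solution $U_{t}$ of \eqref{Eq6} constructed above is the minimal contractive solution, whose cocycle has vacuum expectation the minimal semigroup), so $\T_{t}$ is that minimal semigroup. Uniqueness then follows from conservativity in the usual way: any $C_{0}$-contraction semigroup $\T_{t}'$ associated with $\LL$ dominates the minimal one, so $\T_{t}'(I)\ge\T_{t}(I)=I$, forcing $\T_{t}'(I)=I$; then $\T_{t}'-\T_{t}$ is a positive map annihilating $I$, hence zero.

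The main obstacle is the first step: interpreting and solving \eqref{Eq6} despite $G,\bar\CC,\CC^{*}$ being genuinely unbounded (we are in the case $\sum\|W_{n}\|=\infty$). Everything rests on the feature, special to this model, that $\HH=L^{2}(\A,tr)$ is the closed increasing union of the finite-dimensional reducing subspaces $\A_{\Delta_n}$, on which the coefficients are bounded and eventually independent of $n$; this is what permits bootstrapping the bounded theory of Theorem \ref{ThmQSDE} to the unbounded problem without invoking the abstract minimal-semigroup constructions. A secondary point needing care is surjectivity of the limiting isometry, for which the dual process of Proposition \ref{dual} (or, equivalently, the unitary invariance of each $\A_{\Delta_n}\otimes\Gamma_{sym}$) is used.
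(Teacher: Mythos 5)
Your proposal is correct and follows essentially the same route as the paper: truncate to bounded coefficients on the finite-dimensional blocks $\A_{\Delta_n}$, apply Theorem \ref{ThmQSDE} blockwise, use the invariance and compatibility relations \eqref{crnm}, \eqref{Restriction} plus uniqueness to glue the $U_t^{(n)}$ into an isometric strong limit, invoke the dual process of Proposition \ref{dual} to get unitarity, pass to the limit in the stochastic integrals, and deduce conservativity and minimality from unitarity of the flow. The only cosmetic difference is your parenthetical alternative argument for surjectivity via reducing subspaces, which the paper does not use.
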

\begin{proof}
Recall that the UHF algebra $\A$ can be approximated by finite dimensional algebras, namely $\ A_{\Delta_n} = \prod\limits_{\Vert j\Vert \leq n}M_N(\C)$
and  $\A_{loc} = \bigcup\limits_{n=0}^\infty \A_{\Delta_n}.$
For $n\geq 0$, consider  the following QSDE in $\A_{loc} \otimes\mathcal{E},$
\begin{equation}
U_t^{(n)} = I+ \int\limits_0^t U_s^{(n)}G^{(n)}ds + \int\limits_0^t U_s^{(n)} \CC^{(n)} a^\dagger (ds) -\int\limits_0^t U_s^{(n)} {\CC^{(n)}}^* a(ds).
\label{Utn}
\end{equation}
By Theorem \ref{ThmQSDE}, the QSDE \ref{Utn} admits a unitary solution $U_t^{(n)}$ on $\HH \otimes \Gamma_{sym}$.\\
We now show that the operators $U_t^{(n)} $  satisfy some compatibility condition, that is for $n \geq m$ ,
\begin{equation}
U_t^{(n)}|_{\A_{\Delta_m}}= U_t^{(m)}|_{\A_{\Delta_m}}.
\label{compeq}
\end{equation}
Here the symbol $T|_{\A_{\Delta_m}}$ means the  restriction of $T$  to the subspace  $A_{\Delta_m}\otimes \Gamma_{sym}. $\\
Since these operators $\CC^{(m)}, {\CC^{(m)}}^*$ and $G^{(m)}$ leave $\A_{\Delta_m}$ invariant, the restriction   $U_t^{(m)}|_{\A_{\Delta_m}}$ satisfies  the following QSDE
in $ A_{\Delta_m}\otimes\mathcal{E}, $
\begin{equation}
U_t^{(m)}|_{\A_{\Delta_m}} = I|_{\A_{\Delta_m}} + \int\limits_0^t U_s^{(m)}|_{\A_{\Delta_m}}G^{(m)}|_{\A_{\Delta_m}} ds
\end{equation}
$$+ \int\limits_0^t U_s^{(m)}|_{\A_{\Delta_m}} \CC^{(m)}|_{\A_{\Delta_m}}  a^\dagger  (ds) -\int\limits_0^t U_s^{(m)}|_{\A_{\Delta_m}} {\CC^{(m)}}^*|_{\A_{\Delta_m}} a (ds).$$
\noindent
For $n\geq m$, consider the QSDE  in $\ A_{\Delta_m}\otimes\mathcal{E}$,
\begin{equation}
U_t^{(n)}|_{\A_{\Delta_m}} = I|_{\A_{\Delta_m}} + \int\limits_0^t U_s^{(n)}|_{\A_{\Delta_m}}G^{(n)}|_{\A_{\Delta_m}}ds
\label{Utn.on.m}
\end{equation}
 $$ + \int\limits_0^t U_s^{(n)}|_{\A_{\Delta_m}} \CC^{(n)}|_{\A_{\Delta_m}} a^\dagger  (ds) -\int\limits_0^t U_s^{(n)}|_{\A_{\Delta_m}} {\CC^{(n)}}^*|_{\A_{\Delta_m}}a (ds).$$
With reference to Lemma \ref{Lemma}, equation \eqref{Restriction} and Theorem \ref{ThmQSDE},
 the unitary processes $U_t^{(n)}|_{\A_{\Delta_m}}$ and $U_t^{(m)}|_{\A_{\Delta_m}}$
satisfy the same QSDE in  $ A_{\Delta_m}\otimes\mathcal{E} $. Therefore, by uniqueness of solution in Theorem \ref{ThmQSDE},  \eqref{compeq} follows.\\
Define $U_t$ on $\A_{loc}\otimes\mathcal{E}$ by setting
$$U_t(x\otimes e(f))= U_t^{(n)} (x\otimes e(f))~~~~ \text{if}~~~x \in \ A_{\Delta_n}$$
and extending linearly.    Since  the family  $U_t^{(n)}$   satisfies the  compatibility condition  \eqref{compeq},   $U_t$  is well defined on  $\A_{loc}\otimes\mathcal{E},$   and
for $x \in \ A_{\Delta_m}$  we have
\begin{equation}
U_t(x\otimes e(f))= U_t^{(m)} (x\otimes e(f))= U_t^{(n)} (x\otimes e(f)),\forall  n\geq m.
\label{Uteq}
\end{equation}
Hence $U_t^{(n)}$ converges strongly to $U_t$ on $\A_{loc} \otimes\mathcal{E}$ and
  $U_t$ extends to a contraction operator on $\HH \otimes\Gamma_{sym}$.
As  $\A_{loc} \otimes\mathcal{E}$  is dense in  $\HH \otimes\Gamma_{sym},$   \eqref{Uteq}  gives that  $U_t^{(n)}$ converges strongly to $U_t$ on $\HH \otimes\Gamma_{sym}$
as well and the limit $U_t$ is an isometry.\\


For  ${U_t^{(n)}}$, consider the dual process  ${\tilde{U_t}^{(n)}} = (1\otimes\Gamma(R_t)){U_t^{(n)}}^* (1\otimes\Gamma(R_t)) $.
Then by  Proposition \ref{dual},  $\{\tilde{U_t}^{(n)}\}$ satisfies the following  QSDE in $\A_{loc} \otimes\mathcal{E},$
\begin{equation}
{\tilde{U_t}^{(n)}} = I+ \int\limits_0^t {\tilde{U_s}^{(n)}} {G^{(n)}}^* ds + \int\limits_0^t {\tilde{U_s}^{(n)}} {\CC^{(n)}}^*  a (ds)
-\int\limits_0^t {\tilde{U_s}^{(n)}} \CC^{(n)} a^\dagger(ds).
\label{Utndual}
\end{equation}
The equation \eqref{Utndual} is identical to \eqref{Utn} except that ${\CC^{(n)}}$ is replaced by $-{\CC^{(n)}}$. So  similar  arguments yield  that the operators $\tilde{U_t}^{(n)}$ also satisfy the  compatibility condition
and   converge strongly to an isometry and because  $\tilde{U_t}^{(n)} $ and $\Gamma(R_t)$ are unitaries,
the sequence ${U_t^{(n)}}^*$ of  unitaries converges strongly and thus it must converge to $U_t^*$. Hence $U_t^*$ is an isometry, so $U_t$ is a unitary process. \\

It remains to prove that $U_t$ satisfies the QSDE \eqref{Eq6}. As $U_t$ is a unitary process, the quantum stochastic integral on the right-hand side of \eqref{Eq6} makes sense.
Thus, it is enough to establish that integrals on the right-hand side of \eqref{Utn} converge to integrals in \eqref{Eq6}.
For $xe(f)\in \A_{loc}\otimes \mathcal{E},$  we have
$$\Vert \int\limits_0^t (U_s^{(n)} G^{(n)}  - U_s G) ds (xe(f))\Vert \leq  \int\limits_0^t \Vert(U_s^{(n)} G^{(n)}  - U_s G) (xe(f))\Vert ds,$$
hence by \eqref{Restriction}  and
\eqref{Uteq}, it converges to $0.$
By estimates of   quantum stochastic integrals \cite{krp1}, we have \\
$\hspace*{1cm}\Vert \int\limits_0^t (U_s^{(n)} {\CC^{(n)}}   - U_s \CC)  a^\dagger (ds)(xe(f))\Vert^2$\\
\hspace*{2cm} $\leq  2 e^{\int\limits_0^t(1+|f(s)|^2)ds}\int\limits_0^t \Vert(U_s^{(n)} {\CC^{(n)}}
- U_s \CC ) xe(f)\Vert^2 (1+|f(s)|^2) ds.$\\
Therefore, by \eqref{crnm} and \eqref{Uteq},
 $$\lim_{n\rightarrow \infty}\Vert \int\limits_0^t (U_s^{(n)} {\CC^{(n)}}    - U_s \CC)  a^\dagger (ds)(xe(f))\Vert^2= 0.$$
Convergence of    annihilation   term  follows from a simpler estimate and   using  \eqref{cr*}, \eqref{crnm} and \eqref{Uteq}.
 Thus $U_t$  is a unitary solution to the QSDE \eqref{Eq6}.

Now let us consider the expectation semi-group $(\T_t)_{t\geq0}$ of the homomorphic co-cycle $J_t(\cdot)= U_t^*(\cdot\otimes I)U_t.$
As $U_t$ is a unitary  process,  the QDS $(\T_t)_{t\geq0}$ is conservative minimal semi-group associated with the form \eqref{Eq3}.
\end{proof}

\noindent We conclude by a small remark on the Lindbladian
\begin{equation}\label{WW}
\mathcal L(X)=  \frac{1}{2}\sum_{j=1}^\infty \{
 W_j^*\delta_j(X)  +  \delta_j^\dagger (X) W_j\},  \forall  X\in \mathcal A_{loc}
\end{equation}
where  $W_j\in \mathcal A_{\partial \Delta_j}, \delta_j(X)=[X,W_j],  \delta_j^\dagger (X)=  (\delta_j(X^*)) ^*=[W_j^*,X].$
Though  each component  $ W_j^*\delta_j(.)  +  \delta_j^\dagger (,) W_j $   are bounded  maps,    $\mathcal L$  is  unbounded due  to presence of infinitely many components   (like in \cite{mat}).
For $n\ge 1, $ define   a  bounded map $ \mathcal L^{(n)}(X)=  \frac{1}{2}\sum_{j=1}^n \{
 W_j^*\delta_j(X)  +  \delta_j^\dagger (X) W_j\},  \forall  X\in \mathcal A.$
 Note that for $X\in \A_{\Delta_n}, \delta_k(X)=\delta_j^\dagger (X) =0 $  and $\mathcal L^{(k)}(X)=  \mathcal L^{(n)}(X) $ for every $ k>n. $

 \begin{remark}
The HP equation  on $\mathcal H \otimes   \Gamma_{sym}   (L^2( \mathbb R_+, \mathbf k)),$  where $\mathbf k$ is a separable Hilbert space with   an orthonormal basis  $\{e_j:j \ge 1\},$
 $$U_t = I+ \int\limits_0^t U_s   (-\frac{1}{2}\sum_{j=1}^\infty  W_j^*W_j)ds + \sum_{j=1}^\infty  \int\limits_0^t U_s  W_j a_j^\dagger (ds) -  \sum_{j=1}^\infty  \int\limits_0^t U_s W_j^* a_j(ds)$$
may not make sense as $-\frac{1}{2}\sum_{j=1}^\infty  W_j^*W_j$ may have a trivial domain or not a generator of a $C_0$- semigroup on $\mathcal H$. However, there exist a  homomorphic co-cycle
$J_t:\A\rightarrow  \A^{''} \otimes \mathcal B(\Gamma_{sym})  $ satisfying the Evan-Hudson  equation, for $X\in \A_{loc},$
 $$J_t(X) = X\otimes I+ \int\limits_0^t J_s (\mathcal L (X)) ds + \sum_{j=1}^\infty  \int\limits_0^t J_s ( \delta_j(X)) a_j^\dagger (ds) +
 \sum_{j=1}^\infty  \int\limits_0^t J_s (\delta_j^\dagger (X))a_j(ds).$$
 The expectation semi-group $(\T_t)_{t\geq 0}$ of
  the homomorphic co-cycle $J_t$ is conservative minimal semi-group associated with the Lindbladian 
  \eqref{WW}.
  \end{remark}
  This can be seen  similarly as for HP  equation  in theorem \ref{main} by  constructing  $J_t$  as a strong limit of  homomorphic co-cycles
$\{J_t^{(n)}:\B\rightarrow  \B \otimes \mathcal B(\Gamma_{sym}) \} $ where  $J_t^{(n)}$ satisfies the  Evan-Hudson   equation, for $X\in \B,$
 $$J_t^{(n)}(X) = X\otimes I+ \int\limits_0^t J_s^{(n)} (\mathcal L^{(n)}(X)) ds + \sum_{j=1}^n \int\limits_0^t J_s^{(n)} ( \delta_j(X)) a_j^\dagger (ds) $$
 $$+
 \sum_{j=1}^n  \int\limits_0^t J_s^{(n)} (\delta_j^\dagger (X))a_j(ds)$$  with   bounded   structure maps  and  finite degree  of freedom   ( see \cite{krp1}).
In fact,  $J_t^{(n)}$
 takes $ \mathcal A_{ \Delta_n}$   to $\mathcal A_{ \Delta_n}\otimes \mathcal B(\Gamma_{sym} ),$ and for $X\in  \mathcal A_{ \Delta_n},$
$$
J_t(X)=J_t^{(m)}(X)=J_t^{(n)}(X), \forall  m\ge n.$$

\section*{Acknowledgment}
 Second author acknowledges partial  support  from Council of Scientific and Industrial Research, Govt. of India
 and National Board for Higher Mathematics, DAE,  Govt. of India.
 Both  authors thank Prof. K. B.  Sinha for his valuable suggestions.

\bibliographystyle{amsplain}

\end{document}